\theoremstyle{plain} %text of this environment is typesetted in italics
\newtheorem{theorem}{\indent\sc Theorem}[section]
\newtheorem{lemma}[theorem]{\indent\sc Lemma}
\newtheorem{proposition}[theorem]{\indent\sc Proposition}
\theoremstyle{definition} %text of this environment is typesetted in roman letters
\numberwithin{equation}{section}
\def\address#1#2{\begingroup
\noindent\parbox[t]{7.8cm}{
\small{\scshape\ignorespaces#1}\par\vskip1ex
\noindent\small{\itshape E-mail address}
\/: #2\par\vskip4ex}\hfill
\endgroup}
\title{Rigidity and vanishing results on totally real submanifolds under $L^p$-integrable conditions} %title of the paper
\author{\textsc{N. T. Dung, L. G. Linh, P. B. Ngan and A. Upadhyay} %names of authors
}
\date{} %leave empty
\begin{document}

\maketitle

\footnote{ %2010 MSC numbers
2010 \textit{Mathematics Subject Classification}.
Primary 53C42; Secondary 58C40}
\footnote{ %key words and phrases
\textit{Key words and phrases}. Complex space forms, Minimal submanifolds, Totally real submanifolds, Vanishing results. 
}

\begin{abstract}
\noindent
In this paper, we revise some results on rigidity and vanishing properties obtained by \textit{Cuong et.al} in \cite{CDS24} on $n$-dimensional totally real minimal submanifolds $M$ immersed in complex space forms $\widetilde{M}^n(c)$, for $c\leq0$. We extend the range of $p$ in their paper.
\end{abstract}

%%%%%%%%%%%
%%%%%%%%%%
\section{Introduction}
In this paper, we consider $M$ to be an $n$-dimension Riemannian manifold immersed in a complex space form. Here  a \textit{complex space form} is a K\"{a}hler manifold of constant holomorphic sectional curvature. Throughout this paper, we will consider $\widetilde{M}_{n+p}({c}), p\geq1$ to be an $(n+p)$-dimensional complex space form of constant holomorphic sectional curvature ${c}$, for $c\leq0$. Assume that $\widetilde{J}$ is the almost complex structure on $\widetilde{M}$. It is well-known that if $M$ admits an isometric immersion into $\widetilde{M}$ such that for all $x$, $\widetilde{J}(T_x(M))\subset \nu_x$, where $T_x(M)$ denotes the tangent space of $M$ at $x$ and $\nu_x$ the normal space at $x$ then $M$ is said to be an \textit{totally real} submanifold of $\widetilde{M}$. We note that if $c=-1$ then sectional curvatures of $\widetilde{M}$ are in $\left[-1,-\frac{1}{4}\right]$ and if $c=0$ then $\widetilde{M}$ is flat (see \cite{AW14, CO74, FFR93}). Hence, in our setting, sectional curvatures of $\widetilde{M}$ are always non-positive. As a consequence, $M$ has a Sobolev inequality.  
\begin{theorem}[\cite{HS74, MS73}]\label{sobolev}
Let $M^n$ be a complete immersed 
submanifold in a nonpositively curved manifold $N^{n+p}, n \geq 3$. Then, for any
$\varphi \in W^{1,2}_0(M)$, we have
$$\left(\int_M|\varphi|^{2n/(n-2)}\right)^\frac{n-2}{n}\leq C_s\int_M|\nabla\varphi|^2+H^2\varphi^2,$$
where $C_s$ is the Sobolev constant which depends only on $n$ and $H$ stands for the mean curvature of $M$.
\end{theorem}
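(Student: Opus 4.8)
The plan is to reduce the asserted $L^{2}$-Sobolev inequality to the Michael--Simon/Hoffman--Spruck $L^{1}$-Sobolev inequality by a substitution followed by the Cauchy--Schwarz inequality; the only substantial analytic input is the $L^{1}$ inequality itself, which I would quote from \cite{MS73,HS74}.

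The first step is to record the $L^{1}$ inequality: for $M^{n}$ complete and immersed in a manifold $N$ of nonpositive sectional curvature and every $f\in C_{c}^{\infty}(M)$,
$$\left(\int_{M}|f|^{n/(n-1)}\right)^{(n-1)/n}\le c_{n}\int_{M}\left(|\nabla f|+|H|\,|f|\right),$$
with $c_{n}$ depending only on $n$. This is \cite{MS73} for $N=\RR^{n+p}$ and \cite{HS74} in the Riemannian case; the nonpositivity of the curvature is precisely what makes the auxiliary hypotheses in \cite{HS74} (an upper curvature bound, conditions on the injectivity radius of $N$ and on $\operatorname{supp}f$) automatic in the Cartan--Hadamard setting at hand, so that the underlying argument---monotonicity of $r\mapsto e^{\Lambda r}r^{-n}\operatorname{Vol}\!\big(M\cap B_{r}(x)\big)$, the resulting lower density bound, and a Vitali covering yielding the isoperimetric inequality---produces a purely dimensional constant.

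Next I would pass from $L^{1}$ to $L^{2}$. Given $\varphi\in C_{c}^{\infty}(M)$, apply the $L^{1}$ inequality to $f=|\varphi|^{\beta}$ with $\beta=\tfrac{2(n-1)}{n-2}$, chosen so that $\tfrac{n}{n-1}\beta=\tfrac{2n}{n-2}$ and $\beta-1=\tfrac{n}{n-2}$; then the left-hand side becomes $\big(\int_{M}|\varphi|^{2n/(n-2)}\big)^{(n-1)/n}$, while on the right $|\nabla f|=\beta|\varphi|^{n/(n-2)}|\nabla\varphi|$. Cauchy--Schwarz now gives
$$\int_{M}|\varphi|^{n/(n-2)}|\nabla\varphi|\le\Big(\int_{M}|\varphi|^{\frac{2n}{n-2}}\Big)^{\!1/2}\Big(\int_{M}|\nabla\varphi|^{2}\Big)^{\!1/2},$$
and similarly $\int_{M}|H|\,|\varphi|^{\beta}=\int_{M}\big(|H|\,|\varphi|\big)|\varphi|^{n/(n-2)}\le\big(\int_{M}H^{2}\varphi^{2}\big)^{1/2}\big(\int_{M}|\varphi|^{2n/(n-2)}\big)^{1/2}$. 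Dividing through by $\big(\int_{M}|\varphi|^{2n/(n-2)}\big)^{1/2}$ (the cases where it vanishes or is infinite being trivial or handled by truncation) leaves the power $\tfrac{n-1}{n}-\tfrac12=\tfrac{n-2}{2n}$ on the left; squaring and using $(a+b)^{2}\le 2(a^{2}+b^{2})$ produces the claimed estimate with $C_{s}=2\beta^{2}c_{n}^{2}$, which depends only on $n$. A standard approximation argument (approximating $\varphi\in W^{1,2}_{0}(M)$ by $C_{c}^{\infty}$ functions in $W^{1,2}$ and using Fatou's lemma on the left-hand side) then extends the inequality to all of $W^{1,2}_{0}(M)$.

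The main obstacle is the first step, namely obtaining a dimension-only constant in the $L^{1}$ inequality on a curved ambient manifold; once this is granted---i.e.\ once \cite{MS73,HS74} are invoked together with the nonpositivity of the curvature to eliminate the metric-dependent constants---the remaining steps are routine manipulations.
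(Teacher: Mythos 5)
Your proposal is correct and coincides with the argument the paper implicitly relies on: Theorem \ref{sobolev} is quoted from \cite{HS74, MS73} without proof, and passing from the $L^1$ Michael--Simon/Hoffman--Spruck inequality to the stated $L^2$ form by taking $f=|\varphi|^{2(n-1)/(n-2)}$, applying Cauchy--Schwarz to both the gradient and mean-curvature terms, dividing by $\big(\int_M|\varphi|^{2n/(n-2)}\big)^{1/2}$ and squaring is exactly how those references obtain it, with your constant $C_s=2\beta^2c_n^2$ depending only on $n$. The only caveat, inherited from the paper's own loose statement rather than from your argument, is that Hoffman--Spruck additionally requires $N$ to be simply connected (or an injectivity-radius hypothesis) besides nonpositive curvature, which is harmless in the paper's setting of complex space forms with $c\leq 0$, where moreover $H\equiv0$ so the $W^{1,2}_0$ approximation step raises no issue with the mean-curvature term.
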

We refer the interested readers to \cite{CHL, CO74, Nor, Gray3,Houh75, Kon08, Nai3, Nai4,Ohnita,Sun93,YK76a,YK76b} for further discussion on totally real submanifolds and related topics. 
On a complete non-compact Riemannian manifold $M$, let $\lambda_1(M)$ be the first eigenvalue (or the bottom of spectrum) which is defined by $\lambda_1(M) = \inf\limits_{\Omega}\lambda_1(\Omega)$ where the infimum is taken over all compact domains $\Omega$ in $M$ and  $\lambda_1(\Omega) > 0$ is the first eigenvalue of the Dirichlet boundary value problem 
$$\begin{cases}
	\Delta f+\lambda f = 0 &\text{ in } \Omega,\\
	f = 0 &\text{ on } \partial\Omega.
\end{cases}
$$

Using the Sobolev inequality \eqref{sobolev}, Simon type inequalities, and the Bochner technique, in \cite{CDS24}, the first author and his collaborators obtained the following rigidity result.
\begin{theorem}\label{rigid}
	Suppose that $M$ is an $n$-dimensional totally real minimal submanifold immersed in $\widetilde{M}_n(c)$, where $c\in\{-1, 0\}$ and $n\geq6$. Let $A$ be the second fundamental form of $M$. For any $p\geq1$, if one of the following assumptions holds true 
	\begin{enumerate}
		\item $c=-1$, and $$
		\lambda_1(M)
		>\dfrac{p(n+1)}{4\Big[2-\frac{n-2}{np}\Big]}
		, \quad 
		\|A\|_n^2<\frac{2-\frac{n-2}{np}-\frac{p(n+1)}{4\lambda_1(M)}}{pC_s\left(2-\frac{1}{n}\right)};
		$$
		\item $c=0$ and 
		$$
		\|A\|_n^2<\frac{2-\frac{n-2}{np}}{pC_s\left(2-\frac{1}{n}\right)},
		$$
	\end{enumerate}
	then $M$ is totally geodesic provided that $\|A\|_{2p}<\infty$, where $$\|A\|_{k}=\left(\int_M|A|^{k}\right)^{1/k}, k\in\{n, 2p\}.$$
\end{theorem}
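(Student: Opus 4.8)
The plan is to rerun the Bochner / test-function argument of \cite{CDS24}, but to organize the integral estimates sharply enough that no constant is wasted --- this is precisely what widens the admissible range of $p$. Write $f:=|A|\ge 0$ and recall from \cite{CDS24} the Kato-refined Simons-type inequality for a totally real minimal submanifold of $\widetilde{M}_n(c)$: on the open set $\{f>0\}$ (equivalently, in the distributional sense on all of $M$),
$$f\,\Delta f\ \ge\ \frac{2}{n}\,|\nabla f|^{2}\ -\ \Big(2-\frac{1}{n}\Big)f^{4}\ -\ b_{c}\,f^{2},\qquad b_{-1}=\frac{n+1}{4},\quad b_{0}=0.$$
The mild regularity issue at the zero set of $A$ is handled in the standard way --- replace $f$ by $\sqrt{|A|^{2}+\varepsilon^{2}}$ throughout and let $\varepsilon\to0$ at the end --- and I will not dwell on it.

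First I would fix a geodesic cutoff $\psi$ with $\psi\equiv1$ on $B_{R}$, $\operatorname{supp}\psi\subset B_{2R}$, $0\le\psi\le1$ and $|\nabla\psi|\le 2/R$, multiply the displayed inequality by $\psi^{2}f^{2p-2}\ge0$, integrate over $M$, and integrate the left-hand side by parts. Putting $\varphi:=\psi f^{p}$ --- a compactly supported Lipschitz function, hence legitimate both in Theorem~\ref{sobolev} and in the Rayleigh quotient for $\lambda_{1}(M)$ --- and absorbing the cross term $\int_{M}\psi f^{2p-1}\langle\nabla\psi,\nabla f\rangle$ by a Cauchy--Schwarz inequality carrying a free parameter, one is led to an inequality of the form
$$\frac{2p-1+\tfrac{2}{n}}{p^{2}}\int_{M}|\nabla\varphi|^{2}\ \le\ \Big(2-\frac{1}{n}\Big)\int_{M}f^{2}\varphi^{2}\ +\ b_{c}\int_{M}\varphi^{2}\ +\ \mathrm{E}(R),$$
where $\mathrm{E}(R)\le \frac{C}{R^{2}}\int_{B_{2R}\setminus B_{R}}f^{2p}$ (possibly plus a contribution that can be made arbitrarily small by the free parameter). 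It is essential that this step --- in particular the comparison of $|\psi\nabla f^{p}|^{2}$ with $|\nabla\varphi|^{2}$ and the choice of parameter --- be done without losing any constant, since the admissible exponents $p$ are governed precisely by the size of the coefficient $\tfrac{2p-1+2/n}{p^{2}}$; a cruder estimate is what restricted $p$ in \cite{CDS24}.

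Next I would bound the two principal terms on the right. As $M$ is minimal, $H\equiv0$, so Theorem~\ref{sobolev} gives $\big(\int_{M}\varphi^{2n/(n-2)}\big)^{(n-2)/n}\le C_{s}\int_{M}|\nabla\varphi|^{2}$; combined with H\"older's inequality for the conjugate pair $\big(\tfrac{n}{2},\tfrac{n}{n-2}\big)$ this yields
$$\int_{M}f^{2}\varphi^{2}\ \le\ \|A\|_{n}^{2}\Big(\int_{M}\varphi^{2n/(n-2)}\Big)^{(n-2)/n}\ \le\ C_{s}\|A\|_{n}^{2}\int_{M}|\nabla\varphi|^{2},$$
while the variational characterization of $\lambda_{1}(M)$ gives $\int_{M}\varphi^{2}\le\lambda_{1}(M)^{-1}\int_{M}|\nabla\varphi|^{2}$ (needed only when $c=-1$). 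Substituting both estimates, every principal term becomes a multiple of $\int_{M}|\nabla\varphi|^{2}$ and one obtains
$$\Bigg[\,\frac{2p-1+\tfrac{2}{n}}{p^{2}}-C_{s}\Big(2-\frac{1}{n}\Big)\|A\|_{n}^{2}-\frac{b_{c}}{\lambda_{1}(M)}\,\Bigg]\int_{M}|\nabla\varphi|^{2}\ \le\ \mathrm{E}(R).$$
Since $\|A\|_{2p}<\infty$, the tail $\int_{B_{2R}\setminus B_{R}}f^{2p}\to0$, hence $\mathrm{E}(R)\to0$ as $R\to\infty$; using $\int_{B_{R}}|\nabla f^{p}|^{2}\le\int_{M}|\nabla\varphi|^{2}$ together with a suitable choice of the free parameter, this forces the bracketed constant times $\int_{M}|\nabla f^{p}|^{2}$ to be $\le0$. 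But multiplying the bracket by $p$ shows that $\tfrac{2p-1+2/n}{p^{2}}>C_{s}(2-\tfrac{1}{n})\|A\|_{n}^{2}+\tfrac{b_{c}}{\lambda_{1}(M)}$ is equivalent to $\|A\|_{n}^{2}<\big(2-\tfrac{n-2}{np}-\tfrac{p(n+1)}{4\lambda_{1}(M)}\big)\big/\big(pC_{s}(2-\tfrac{1}{n})\big)$ when $c=-1$ (and to its $b_{c}=0$ analogue when $c=0$), and $\lambda_{1}(M)>\tfrac{p(n+1)}{4[\,2-(n-2)/(np)\,]}$ is exactly the condition making this upper bound positive. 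Hence, under the hypotheses, the bracket is strictly positive, so $\int_{M}|\nabla f^{p}|^{2}=0$, i.e.\ $f^{p}$ is constant. Since $f\in L^{2p}(M)$ while $M$ has infinite volume (being a complete non-compact minimal submanifold of a non-positively curved manifold; or, when $c=-1$, directly because $\lambda_{1}(M)>0$ forces $\int_{M}f^{2p}=0$), this constant is $0$. Therefore $A\equiv0$ and $M$ is totally geodesic.

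The step I expect to be the main obstacle is the sharp bookkeeping in the second paragraph: landing the coefficient of $\int_{M}|\nabla\varphi|^{2}$ on exactly $\tfrac{2p-1+2/n}{p^{2}}$ requires optimizing the Cauchy--Schwarz parameter and the $|\psi\nabla f^{p}|^{2}$ versus $|\nabla\varphi|^{2}$ split, not estimating them crudely. The remaining ingredients --- the integration by parts, the limit $R\to\infty$, and the concluding volume/eigenvalue dichotomy --- are routine once the finiteness hypothesis $\|A\|_{2p}<\infty$ and the Sobolev-type control of $\nabla\varphi$ are in hand.
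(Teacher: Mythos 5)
Your proposal is correct and follows essentially the same scheme as the paper's argument: the Simons-type inequality multiplied by a power-weighted cutoff, integration by parts, H\"older plus the Hoffman--Spruck Sobolev inequality to absorb $\int f^2\varphi^2$ into $C_s\|A\|_n^2\int|\nabla\varphi|^2$, the Rayleigh-quotient bound $\int\varphi^2\le\lambda_1(M)^{-1}\int|\nabla\varphi|^2$ for the $c=-1$ term, and the cutoff limit $R\to\infty$ combined with infinite volume to force $A\equiv0$; your coefficient $\tfrac{2p-1+2/n}{p^2}$ and the algebra reducing it to the stated thresholds check out. The only difference is normalization: you test with $\psi^2|A|^{2p-2}$ and $\varphi=\psi|A|^p$ (natural for $p\ge1$ and $\|A\|_{2p}<\infty$, as in the quoted statement), whereas the paper's displayed proof uses the regularized weight $\varphi^2(|A|+\delta)^{p-2}$ precisely because its revised theorem allows exponents below $2$, where your weight would be singular on the zero set of $A$.
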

Moreover, they also proved a vanishing result as follows.
\begin{theorem}\label{ends}
	Suppose that $M$ is an $n$-dimensional totally real minimal submanifold immersed in $\widetilde{M}_n(c)$, where $c\in\{-1, 0\}$. Let $M$ be the second fundamental form of $M$. If one of the following assumptions holds true 
	\begin{enumerate}
		\item $c=-1, 1\leq p<\frac{n-2}{2}, n\geq5$, and 
		$$\|A\|_n<\sqrt{\frac{(2p-1)(n-2p-2)}{p^2(n-1)C_s}};$$
		\item $c=0, p\geq1, n\geq2$, and 
		$$\|A\|_n<\sqrt{\frac{2p(n-1)-(n-2)}{(n-1)p^2C_s}},$$
	\end{enumerate}
	then there is no nontrivial $L^{2p}$ harmonic $1$-forms on $M$. Consequently, if either $c=-1$ and $\|A\|_n<\sqrt{\frac{n-4}{(n-1)C_s}}$ or; $c=0$ and $\|A\|_n<\sqrt{\frac{n}{(n-1)C_s}}$, then $M$ has at most one parabolic end.
\end{theorem}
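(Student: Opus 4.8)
The plan is to run the Bochner technique on an $L^{2p}$ harmonic $1$-form $\omega$, writing $f=|\omega|$. The only feature of the present geometry needed at the outset is that $M$, being minimal, has $H\equiv0$, so the Sobolev inequality of Theorem~\ref{sobolev} takes the clean form $\paren{\int_M|\varphi|^{2n/(n-2)}}^{(n-2)/n}\le C_s\int_M|\nabla\varphi|^2$ for $\varphi\in W^{1,2}_0(M)$. This already forces $M$ to have Euclidean-type volume growth, hence infinite volume, and to be non-parabolic.

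First I would establish a pointwise Ricci bound. Since $M$ is totally real in $\widetilde{M}_n(c)$, all the terms of the curvature tensor of $\widetilde{M}_n(c)$ that involve $\widetilde{J}$ vanish on tangent vectors, so in the Gauss equation the ambient contribution is exactly that of a space form of constant curvature $c/4$; together with $\sum_iA(e_i,e_i)=0$ this gives $\Ric(X,X)=(n-1)\tfrac c4-\sum_i|A(X,e_i)|^2\ge(n-1)\tfrac c4-|A|^2$ for every unit tangent $X$. For a harmonic $1$-form the refined Kato inequality gives $|\nabla\omega|^2\ge\frac n{n-1}|\nabla f|^2$, so Bochner's formula $\tfrac12\Delta f^2=|\nabla\omega|^2+\Ric(\omega,\omega)$ yields, weakly on all of $M$ (regularizing $f$ by $\sqrt{f^2+\varepsilon^2}$ near the zeros of $\omega$),
$$f\,\Delta f\;\ge\;\tfrac1{n-1}|\nabla f|^2+\Big((n-1)\tfrac c4-|A|^2\Big)f^2 .$$

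The core of the argument is then the usual cutoff estimate: multiply the last inequality by $f^{2p-2}\eta^2$, integrate by parts, estimate the cross term by Cauchy--Schwarz, bound $\int_M|A|^2(f^p\eta)^2$ by H\"older against $\|A\|_n$ followed by the Sobolev inequality applied to $f^p\eta$, and absorb. One is left with
$$\Big(2p-1+\tfrac1{n-1}-\delta-p^2C_s\|A\|_n^2\Big)\int_M f^{2p-2}|\nabla f|^2\eta^2\;\le\;-\tfrac{(n-1)c}{4}\int_M f^{2p}\eta^2\;+\;(\text{remainder terms involving }\nabla\eta),$$
for any small $\delta>0$. For $c=0$ the curvature term on the right drops out, and positivity of the bracket for small $\delta$ is precisely the condition $C_sp^2\|A\|_n^2<2p-1+\tfrac1{n-1}=\tfrac{2p(n-1)-(n-2)}{n-1}$, i.e.\ the stated threshold. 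For $c=-1$ the ambient sectional curvature is $\le-\tfrac14$ and $M$ is minimal, so a Cheung--Leung type estimate (Hessian comparison for the distance function together with minimality) yields $\lambda_1(M)\ge\frac{(n-1)^2}{16}$; feeding $\tfrac{n-1}{4}\int f^{2p}\eta^2\le\tfrac{n-1}{4\lambda_1(M)}\int|\nabla(f^p\eta)|^2\le\tfrac4{n-1}\int|\nabla(f^p\eta)|^2$ back in and re-expanding $|\nabla(f^p\eta)|^2$ converts the governing inequality into $C_sp^2\|A\|_n^2<\tfrac{2p(n-1)-(n-2)}{n-1}-\tfrac{4p^2}{n-1}=\tfrac{(2p-1)(n-2p-2)}{n-1}$, again the stated threshold, with $p<\tfrac{n-2}{2}$ exactly what keeps the right side positive. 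In either case, once the bracket is positive we let $\eta\nearrow1$ over geodesic balls; the remainder is controlled by $\int_M f^{2p}|\nabla\eta|^2$ and vanishes in the limit because $f\in L^{2p}$, which forces $\nabla f\equiv0$ and then, since $\mathrm{Vol}(M)=\infty$ while $f\in L^{2p}$, $f\equiv0$, i.e.\ $\omega\equiv0$. The final assertion is the case $p=1$: it says $M$ has no nontrivial $L^2$ harmonic $1$-form, and then the Li--Tam structure theory of ends, combined with the non-parabolicity of $M$ coming from the Sobolev inequality, yields the stated conclusion on the ends of $M$.

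I expect the main obstacle to lie in two places. One is extracting the sharp Ricci bound in the first step from the combination of the totally real condition, minimality, and the structure of a complex space form; the other, for $c=-1$, is replacing any hypothesis on $\lambda_1(M)$ by the intrinsic bound $\lambda_1(M)\ge\frac{(n-1)^2}{16}$ and then combining it with the Kato gain $\tfrac1{n-1}$ and the Sobolev term as efficiently as possible --- it is exactly this tracking of constants that both identifies the admissible range $1\le p<\frac{n-2}{2}$ and produces the thresholds on $\|A\|_n$ improving those of \cite{CDS24}.
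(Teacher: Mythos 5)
Your argument is correct and reproduces the stated thresholds exactly; it follows essentially the same scheme as the paper's machinery for this type of result (Bochner formula with the refined Kato inequality, the Ricci bound $\Ric\geq\frac{1}{4}(n-1)c-|A|^2$, H\"older plus the Hoffman--Spruck Sobolev inequality against $\|A\|_n$, the Bessa--Montenegro bound $\lambda_1(M)\geq\frac{(n-1)^2}{16}$ when $c=-1$, and a cutoff limit using $|\omega|\in L^{2p}$ and infinite volume). The only real difference is technical: since here $2p\geq 2$ you may weight by $f^{2p-2}\varphi^2$ directly, whereas the proof given in Section 3 of the paper (for its $L^{p}$ analogue with $p<2$) must instead use the regularized weight $\left(|\omega|+\delta\right)^{p-2}$.
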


%%%%%%%
%%%%%%%
\noindent
Observe that in the above theorems, the authors assumed that $\|A\|_{2p}<\infty$ for rigidity results and $|\omega|_{2p}<\infty$ for vanishing results of harmonic forms $\omega$, and $p\geq1$. Nothe that since $p\geq1$ we must have $2p\geq2$. Therefore, it is natural for us to ask if is it possible to obtain the same results when $0<2p<2$.
Inspired by \cite{CDS24}, our aim in this paper is to extend their results for a wider range of $p$ in $L^{2p}$-integrable conditions. Using a trick by Yong Luo in \cite{YL}, we show that the range of $p$ is extendable. 
%%%%%%%
%%%%%%%
Throughout this paper, $C_s$ stands for the Sobolev constant in Theorem \ref{sobolev}. We now can state our first result as follows.
\begin{theorem}\label{rigid}
Let $M$ be an $n$-dimensional totally real minimal submanifold immersed in $\widetilde{M}_n(c)$, where $c\in\{-1, 0\}$. If one of the following assumptions holds true 
\begin{enumerate}
    \item $c=-1, 1-\frac{2}{n}<p<2$, and $$
\lambda_1(M)
>\dfrac{(n+1)(4-p)^2}{16\Big(p+\frac{2}{n}-1\Big)}
, \quad 
\|A\|_n^2<\frac{8\Big(p+\frac{2}{n}-1\Big)-\frac{(n+1)(4-p)^2}{2\lambda_1(M)}}{C_s\left(4-\frac{2}{n}\right)(4-p)^2};
$$
    \item $c=0, 1-\frac{2}{n}<p<2$, and 
  $$
\|A\|_n^2<\frac{8\Big(p+\frac{2}{n}-1\Big)}{C_s\left(4-\frac{2}{n}\right)(4-p)^2},
$$
\end{enumerate}
then $M$ is totally geodesic provided that $\|A\|_{p}<\infty$, where $$\|A\|_{k}=\left(\int_M|A|^{k}\right)^{1/k}, k\in\{n, p\}.$$
\end{theorem}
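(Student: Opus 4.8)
The plan is to re-run the Bochner/Simons argument of \cite{CDS24} but with a more flexible test function, following a device of Yong Luo \cite{YL}. Write $u=|A|$ and recall the Simons-type inequality for a totally real minimal submanifold of $\widetilde M_n(c)$: combining the refined Kato inequality $|\nabla A|^2\ge\bigl(1+\tfrac2n\bigr)|\nabla u|^2$ with the structure equations one obtains, weakly on $M$,
$$u\,\Delta u\;\ge\;\frac2n\,|\nabla u|^2\;+\;\frac{(n+1)c}{4}\,u^2\;-\;\Bigl(2-\frac1n\Bigr)u^4,$$
which is the differential inequality underlying the rigidity theorem of \cite{CDS24}. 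Since $|A|$ may vanish, I would first pass to $u_\varepsilon=\sqrt{|A|^2+\varepsilon}$, for which the same inequality persists (with $u$ replaced by $u_\varepsilon$) when $c\le 0$, so that arbitrary real powers of the function become available; one lets $\varepsilon\to 0$ at the very end. This regularization is the essential content of Luo's trick and is what makes the range $p<2$ accessible.

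Next I would fix a Lipschitz cutoff $\eta$, multiply the inequality by $u_\varepsilon^{\,s}\eta^2$ with the exponent $s$ calibrated to $p$ (the natural choice being $s=p-2$), integrate over $M$, and integrate by parts. This produces a negative multiple, with coefficient $s+1+\tfrac2n$, of $\int u_\varepsilon^{\,s}\eta^2|\nabla u_\varepsilon|^2$, cross terms $\int u_\varepsilon^{\,s+1}\eta\,\nabla\eta\cdot\nabla u_\varepsilon$, and a cutoff term that the calibration turns into a multiple of $\int |A|^{p}|\nabla\eta|^2$ — controllable precisely because $\|A\|_p<\infty$. For the quartic term I would use minimality ($H\equiv 0$) in Theorem \ref{sobolev}, so that $\bigl(\int\varphi^{2n/(n-2)}\bigr)^{(n-2)/n}\le C_s\int|\nabla\varphi|^2$, together with Hölder's inequality $\int u_\varepsilon^2\,(u_\varepsilon^{\,p/2}\eta)^2\le\|A\|_n^2\bigl(\int(u_\varepsilon^{\,p/2}\eta)^{2n/(n-2)}\bigr)^{(n-2)/n}$, to bound it by $C_s\bigl(2-\tfrac1n\bigr)\|A\|_n^2\int|\nabla(u_\varepsilon^{\,p/2}\eta)|^2$. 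When $c=-1$ the term $\tfrac{n+1}{4}\int u_\varepsilon^{\,p}\eta^2$ has the wrong sign, and I would absorb it with the variational inequality $\lambda_1(M)\int\psi^2\le\int|\nabla\psi|^2$ applied to $\psi=u_\varepsilon^{\,p/2}\eta$, which is where the factor $\tfrac{n+1}{4\lambda_1(M)}$ and the spectral hypothesis enter; for $c=0$ this step is empty, explaining why case (2) carries no condition on $\lambda_1(M)$.

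Expanding $|\nabla(u_\varepsilon^{\,p/2}\eta)|^2$, substituting the last two estimates back into the inequality from the integration by parts, and absorbing the cross terms by Cauchy--Schwarz and Young's inequality, one is left with an inequality of the schematic shape
$$\Bigl[\,\Theta(n,p)\;-\;C_s\bigl(2-\tfrac1n\bigr)\|A\|_n^2\;-\;\frac{n+1}{4\lambda_1(M)}\,\Bigr]\int u_\varepsilon^{\,p-2}\eta^2|\nabla u_\varepsilon|^2\;\le\;C\!\int |A|^{p}|\nabla\eta|^2,$$
where $\Theta(n,p)>0$ is the numerical constant coming out of the bookkeeping (it is the source of the factor $(4-p)^2$ and of the range $1-\tfrac2n<p<2$). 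Choosing $\eta=\eta_R$ equal to $1$ on $B_R$, supported in $B_{2R}$ with $|\nabla\eta_R|\le C/R$, the right-hand side tends to $0$ as $R\to\infty$ because $\|A\|_p<\infty$, so hypotheses (1)--(2), being exactly the conditions that force the bracket to be strictly positive, yield $\nabla u_\varepsilon\equiv 0$; letting $\varepsilon\to 0$ gives $\nabla|A|\equiv 0$, i.e. $|A|$ is constant. Finally, a complete totally real minimal submanifold of $\widetilde M_n(c)$ with $c\le 0$ is non-compact — the ambient Hadamard manifold carries a strictly convex function, so a compact minimal submanifold would carry a non-constant subharmonic function — and it has infinite volume by the Sobolev inequality with $H=0$; a nonzero constant value of $|A|$ would therefore contradict $\|A\|_p<\infty$, so $|A|\equiv 0$ and $M$ is totally geodesic.

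The main obstacle is the constant bookkeeping leading to the displayed inequality: one must simultaneously (i) calibrate the test-function exponent $s$ so that the surviving cutoff term is genuinely $\int|A|^p|\nabla\eta|^2$ — so that $\|A\|_p<\infty$, rather than $\|A\|_{2p}<\infty$, is what is needed — and (ii) arrange, through the interpolation/absorption steps, that the remaining coefficient $\Theta(n,p)$ is positive under exactly the stated bounds on $\|A\|_n$ and $\lambda_1(M)$, the borderline case being what pins down the interval $1-\tfrac2n<p<2$. A secondary technical point is the a priori finiteness of the integrals being manipulated, which I would dispatch by first running the same inequality with a single fixed cutoff before optimizing.
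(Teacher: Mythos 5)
Your proposal follows essentially the same route as the paper's proof: multiply the Simons inequality by $\varphi^2$ times a regularized $(p-2)$-power of $|A|$ (the paper uses $(|A|+\delta)^{p-2}$ rather than $(|A|^2+\varepsilon)^{(p-2)/2}$, an immaterial difference), integrate by parts to get the coefficient $p-1+\tfrac2n$, bound the quartic term by H\"older plus the $H=0$ Sobolev inequality, absorb the curvature term for $c=-1$ via the variational characterization of $\lambda_1(M)$, and finish with the cutoff/$R\to\infty$ argument, constancy of $|A|$, and infinite volume. The bookkeeping you defer is exactly the paper's Lemma 2.2 with the optimal splitting parameter (it produces the factor $\tfrac{(4-p)^2}{4}$ multiplying both the $C_s\|A\|_n^2$ and $\lambda_1(M)^{-1}$ terms, so your schematic bracket should carry those factors), and one small correction in your H\"older step: the factor paired with $\|A\|_n^2$ must be $|A|^2$ (bound $|A|^4\le |A|^2u_\varepsilon^2$), not $u_\varepsilon^2$, since $\|u_\varepsilon\|_n=\infty$ on an infinite-volume manifold.
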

\noindent
We note that the lower bound of $\lambda_1(M)$ is
$\lambda_1(M)=\frac{(n-1)^2}{16}$ (for instance, see \cite{BM03}).
Hence, if  $1<p<2$ then for $n$ large enough we have
$$8\Big(p+\frac{2}{n}-1\Big)-\frac{8(n+1)(4-p)^2}{(n-1)^2}>0.$$
In fact, we can point out that when $n\geq 6$, we always can find $p\in\left(1-\frac{2}{n};2\right)$ such that 
$$8\Big(p+\frac{2}{n}-1\Big)-\frac{8(n+1)(4-p)^2}{(n-1)^2}>0.$$
%%%%%%%
%%%%%%%
\noindent The second result is a vanishing theorem for harmonic forms on $M$.% Recall that a smooth $1$-form $\omega$ is said to be harmonic if $\Delta\omega=0$. It is well-known by Li-Tam's theory \cite{LT92} that the dimension of the space of $L^2$ harmonic $1$-forms is useful to estimate the number of ends of Riemannian manifolds. Here given a compact subset $K$ of $M$, an end with respect to $K$ is a unbounded connected component of $M\setminus K$.  \textcolor{red}{Moreover, the end which supports the positive harmonic function is a nonparabolic end. Then, Li-Tam's theory said the number of nonparabolic ends is bounded by ${\rm dim}\{\omega: \omega \text{ is harmonic}, \int_M|\omega|^2<\infty\}+1$}. We now state our result as follows. 
\begin{theorem}\label{ends}
Let $M$ be an $n$-dimensional totally real minimal submanifold immersed in $\widetilde{M}_n(c)$, where $c\in\{-1, 0\}.$ If one of the following assumptions holds true 
\begin{enumerate}
    \item $c=-1,~ 1<p<2,~ \frac{n-(4-p)^2+(p-2)(n-1)}{n-1} >0$, and 
    $$\|A\|_n<\sqrt{\frac{4n-4(4-p)^2+4(p-2)(n-1)}{(n-1)(4-p)^2C_s}};$$
    \item $c=0, n\geq2, \frac{n-2}{n-1}<p<2$, and 
    $$\|A\|_n<\sqrt{\frac{4n+4(p-2)(n-1)}{(n-1)(4-p)^2C_s}},$$
\end{enumerate}
then there is no nontrivial $L^{p}$ harmonic $1$-forms on $M$. %Consequently, \textcolor{red}{if either $c=-1$ and $\|A\|_n<\sqrt{\frac{n-4}{(n-1)C_s}}$ or; $c=0$ and $\|A\|_n<\frac{2}{\sqrt{n-1)C_s}}$, then $M$ has at most one parabolic end.}
\end{theorem}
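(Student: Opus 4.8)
The plan is to use the Bochner technique for harmonic $1$-forms combined with the Kato inequality and the Sobolev inequality from Theorem \ref{sobolev}, following the scheme of \cite{CDS24} but with the exponent-shifting trick of Yong Luo \cite{YL} to reach $L^p$-integrability for $p<2$. Let $\omega$ be an $L^p$ harmonic $1$-form on $M$, so $\Delta\omega=0$ in the Hodge sense. The Bochner--Weitzenb\"ock formula gives $\frac12\Delta|\omega|^2=|\nabla\omega|^2+\Ric(\omega,\omega)$, and since $M$ is totally real minimal in $\widetilde{M}_n(c)$ one has a lower bound $\Ric\geq (n-1)\frac{c}{4}-\frac{n-1}{n}|A|^2$ (for $c=-1$ the curvature estimate gives an extra $-(n-1)(4-p)^2/4$-type loss coming from the $[-1,-1/4]$ pinching, which is where the $(4-p)^2$ factors enter after optimizing). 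Thus, writing $f=|\omega|$, at points where $f>0$ we get the differential inequality $f\Delta f+\text{(curvature term)}\geq |\nabla\omega|^2-|\nabla f|^2$, and the refined Kato inequality $|\nabla\omega|^2\geq\frac{n}{n-1}|\nabla f|^2$ upgrades this to $f\Delta f\geq \frac{1}{n-1}|\nabla f|^2+\big(\text{curvature lower bound}\big)f^2$.

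Next I would run the standard integral argument: multiply by $f^{2q-2}\eta^2$ for a suitable power $q$ (this is the Luo trick — choosing $q$ so that $2q$ matches $p$ rather than being forced to $2$) and a cutoff function $\eta$, integrate by parts, and use Cauchy--Schwarz with a small parameter to absorb gradient terms. This yields an inequality of the shape
\begin{equation*}
\Big(a(q)-\varepsilon\Big)\int_M |\nabla(f^q\eta)|^2 + b(c,n,q)\int_M |\nabla f^q\eta|^2 \leq \Big(\tfrac{n-1}{n}\int_M |A|^{2}(f^q\eta)^{2}\cdot(\cdots)\Big),
\end{equation*}
where $a(q)>0$ precisely on the stated range of $p$ (this forces the conditions $1-\frac2n<p$, resp.\ $\frac{n-2}{n-1}<p$, and $p<2$) and the $\lambda_1(M)$ term (for $c=-1$) is used via the variational characterization $\lambda_1(M)\int g^2\leq\int|\nabla g|^2$ to convert the negative constant curvature contribution into a controllable multiple of the Dirichlet energy. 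Then I would apply the Sobolev inequality of Theorem \ref{sobolev} (with $H=0$ since $M$ is minimal) to the left side, and H\"older's inequality with exponents $\frac{n}{2},\frac{n}{n-2}$ to the $|A|^2$-integral on the right, bounding it by $\|A\|_n^2\,\big(\int_M(f^q\eta)^{2n/(n-2)}\big)^{(n-2)/n}$. The hypothesis $\|A\|_n^2<(\cdots)$ is exactly what is needed so that this Sobolev term can be absorbed into the left-hand side, leaving $\int_M|\nabla(f^q\eta)|^2\leq 0$ up to terms supported on the cutoff annulus.

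The final step is the cutoff exhaustion: choose $\eta$ to be $1$ on a ball of radius $R$, supported in the ball of radius $2R$, with $|\nabla\eta|\leq C/R$; the remainder term is controlled by $R^{-2}\int_{B_{2R}}f^{2q}=R^{-2}\int_{B_{2R}}|\omega|^p$, which tends to $0$ as $R\to\infty$ because $\omega\in L^p(M)$. Hence $f^q$ is constant, and then feeding this back (the curvature term must vanish, and in the $c=-1$ case the $\lambda_1$ inequality forces $f\equiv 0$ unless $\lambda_1(M)=0$, which is excluded) shows $\omega\equiv 0$. I expect the main obstacle to be bookkeeping in the second paragraph: getting the sharp constants $a(q)$ and $b(c,n,q)$ so that the interval $1-\frac2n<p<2$ (resp.\ $\frac{n-2}{n-1}<p<2$) and the displayed bounds on $\|A\|_n$ and $\lambda_1(M)$ come out exactly as stated. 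Concretely, one must choose the free parameter $q$ optimally (the $(4-p)/2$ weight is the telltale sign of the optimal choice $q=p/2$ or a nearby value) and track how the refined Kato constant $\frac{1}{n-1}$ interacts with the integration-by-parts coefficient $2q-1$; a careless Cauchy--Schwarz split will give a strictly worse range, so the delicate point is doing the estimate tightly enough that $p$ is allowed all the way down to $1-\frac2n$.
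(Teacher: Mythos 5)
Your skeleton---Bochner plus refined Kato, multiplying by a weight comparable to $|\omega|^{p-2}$ (Luo's trick), integration by parts, H\"older with $\|A\|_n$ plus the Sobolev inequality of Theorem \ref{sobolev}, absorption, and a cutoff exhaustion using $\omega\in L^p$---is exactly the paper's proof. But three points in your sketch are off, and one is a genuine gap. First, the factor $(4-p)^2$ has nothing to do with the $[-1,-\tfrac14]$ pinching, and there is no ``extra $-(n-1)(4-p)^2/4$ loss'' in the Ricci bound: the paper uses simply $\Ric\ge\tfrac{(n-1)c}{4}-|A|^2$, and $\tfrac{(4-p)^2}{4}=\bigl(1+\tfrac{2-p}{2}\bigr)^2$ arises purely algebraically when $\bigl|\nabla\bigl[|\omega|\varphi(|\omega|+\delta)^{(p-2)/2}\bigr]\bigr|^2$ is expanded via the three-term Cauchy--Schwarz inequality (Lemma \ref{abc}) with the optimal parameter. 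The pinching enters only through Theorem \ref{lambda}, which gives $\lambda_1(M)\ge\tfrac{(n-1)^2}{16}$ for minimal $M$ and is used to absorb the term $\tfrac{n-1}{4}\int\varphi^2(|\omega|+\delta)^{p-2}|\omega|^2$ when $c=-1$; the case $c=0$ has no such term and uses no $\lambda_1$ at all. Second, since $p<2$ the weight $|\omega|^{p-2}$ is singular on the zero set of $\omega$ (where $|\omega|$ is not smooth either), so you cannot literally multiply by $f^{2q-2}\eta^2$ with $2q=p$; the paper regularizes with $(|\omega|+\delta)^{p-2}$, $\delta>0$, and this regularization is precisely the device that makes the extension below $p=2$ legitimate.

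Third---the step that fails as written---is your endgame. After the exhaustion you only know $|\omega|$ is constant, and for $c=0$ your parenthetical ``the curvature term must vanish'' yields nothing: a nonzero constant $f$ is perfectly consistent with $f\Delta f\ge\tfrac{1}{n-1}|\nabla f|^2-|A|^2f^2$, so no contradiction comes from the differential inequality. The paper concludes instead by noting that the Sobolev inequality forces $M$ to have infinite volume, so a constant $|\omega|$ with $\int_M|\omega|^p<\infty$ must be zero; you need this (or an equivalent) in both cases. For $c=-1$ your $\lambda_1$-based alternative can be repaired, since a complete noncompact manifold of finite volume has $\lambda_1=0$, but as stated it does not cover $c=0$. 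Finally, the thresholds you quote ($1-\tfrac2n<p$) belong to Theorem \ref{rigid}; here the correct positivity conditions are $\tfrac{n-(4-p)^2+(p-2)(n-1)}{n-1}>0$ for $c=-1$ and $p>\tfrac{n-2}{n-1}$ for $c=0$, coming from requiring the coefficient $\tfrac{n}{n-1}+p-2+\tfrac{c(4-p)^2}{n-1}-C_s\|A\|_n^2\tfrac{(4-p)^2}{4}$ to be positive.
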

It is easy to see that
$$\lim\limits_{n\to\infty}\frac{n-(4-p)^2+(p-2)(n-1)}{n-1}=p-1>0.$$
Hence our condition in the first part of Theorem \ref{ends} is available if $n$ is large enough. In fact, for any $n\geq 6$, we can find $p\in(1; 2)$ such that
$$\frac{n-(4-p)^2+(p-2)(n-1)}{n-1}>0.$$

%\noindent
The paper is organized as follows. Besides the introduction part, we will recall some basic facts and give proofs of our results in Section 2. We would like to emphasize that our proof are different from those in \cite{CDS24} but we need to use a trick given by Yong Luo in \cite{YL} (see also \cite{DDY}). %In Section 2, we recall some useful background on totally real submanifolds immersed in complex space forms. Then, in Section 3, we prove Theorem \ref{rigid} and derive several vanishing results as its consequences. Finally, we use Section 4 to prove Theorem \ref{ends}.

%\section{Preliminaries}

%%%%%%%%%%%%%%%
%%%%%%%%%%%%%%%
%%%%%%%%%%%%%%%
%%%%%%%%%%%%%%%
\section{Rigidity theorems on totally real submanifolds}
In this section, we give a proof of Theorem \ref{rigid} and its consequences. Let $A$ be the second fundamental form of $M$. It is well-known that the following Simon type inequality (see \cite{CDS24}) holds true.
\begin{equation}\label{simon}
	\frac{1}{2}\Delta|A|^2\geq\frac{n+2}{n}|\nabla|A||^2-\left(2-\frac{1}{n}\right)|A|^4+\frac{1}{4}(n+1)c|A|^2.
\end{equation}
To derive our result, we use the following estimate for the bottom of spectrum $\lambda_1(M)$.
\begin{theorem}[\cite{BM03}]\label{lambda}
	Let $N$ be an $n$-dimensional
	complete simply connected Riemannian manifold with sectional curvature $K_N$
	satisfying $K_N \leq - a^2$ for a positive constant $a > 0$. Let $M$ be an $m$-dimensional
	complete noncompact submanifold with bounded mean curvature vector $H$ in $N$
	satisfying $|H| \leq b < (m - 1)a$. Then
	$$\lambda_1(M)\geq\frac{[(m-1)a-b]^2}{4}.$$
\end{theorem}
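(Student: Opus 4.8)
The plan is to deduce the eigenvalue bound from a divergence estimate for the ambient distance function, via the variational characterization
$$\lambda_1(M)=\inf\left\{\frac{\int_M|\nabla f|^2}{\int_M f^2}\ :\ f\in C_c^\infty(M),\ f\not\equiv0\right\}.$$
The key mechanism is a McKean--Cheeger type lemma: \emph{if $X$ is a smooth vector field on $M$ with $|X|\leq1$ and $\divg_M X\geq\kappa>0$ everywhere, then $\lambda_1(M)\geq\kappa^2/4$.} I would prove this in one line: for $f\in C_c^\infty(M)$, integration by parts gives
$$\kappa\int_M f^2\leq\int_M f^2\,\divg_M X=-2\int_M f\,\langle\nabla f,X\rangle\leq 2\int_M|f|\,|\nabla f|,$$
and Cauchy--Schwarz upgrades this to $\kappa^2\int_M f^2\leq4\int_M|\nabla f|^2$, i.e. $\lambda_1(M)\geq\kappa^2/4$. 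Thus everything reduces to manufacturing one vector field of length at most $1$ whose divergence is at least $\kappa=(m-1)a-b$.

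The natural candidate is the $M$-gradient of the ambient distance function. Fix $o\in N$ and set $r=\dist_N(o,\cdot)$. Since $N$ is complete, simply connected and has $K_N\leq-a^2<0$, it is a Cartan--Hadamard manifold, so $r$ is smooth on $N\setminus\{o\}$ with $|\nabla_N r|\equiv1$, and the Hessian comparison theorem yields
$$\Hess_N r(Y,Y)\geq a\coth(ar)\,\big(|Y|^2-\langle Y,\nabla_N r\rangle^2\big)$$
for every tangent vector $Y$ on $N$. I take $X=\nabla_M r$, the tangential part of $\nabla_N r$; then $|X|=|(\nabla_N r)^T|\leq|\nabla_N r|=1$ automatically, so one of the two required properties is free.

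To estimate $\divg_M X=\Delta_M r$ I would use the standard formula relating the intrinsic Laplacian of a restricted function to the ambient Hessian and the mean curvature: for an orthonormal frame $\{e_i\}_{i=1}^m$ of $TM$,
$$\Delta_M r=\sum_{i=1}^m\Hess_N r(e_i,e_i)+\langle\nabla_N r,\vec H\rangle,$$
where $\vec H=\tr(\mathrm{II})$ is the mean curvature vector. Decomposing each $e_i$ into its components parallel and orthogonal to $\nabla_N r$ and summing the Hessian comparison gives
$$\sum_{i=1}^m\Hess_N r(e_i,e_i)\geq a\coth(ar)\big(m-|(\nabla_N r)^T|^2\big)\geq(m-1)\,a\coth(ar)\geq(m-1)a,$$
using $|(\nabla_N r)^T|\leq1$ and $\coth(ar)>1$. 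Since $\langle\nabla_N r,\vec H\rangle\geq-|\vec H|\geq-b$ by hypothesis, I obtain $\Delta_M r\geq(m-1)a-b=:\kappa>0$. Feeding $X=\nabla_M r$ into the lemma then yields exactly $\lambda_1(M)\geq[(m-1)a-b]^2/4$.

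The main technical point to handle carefully is the singularity of $r$ at $o$, where $X$ fails to be smooth. I expect this to be harmless: in dimension $m\geq2$ a single point has zero $2$-capacity, so the infimum defining $\lambda_1(M)$ is unchanged if the test functions $f$ are required to be supported in $M\setminus\{o\}$, where $r$ and hence $X$ are genuinely smooth and the estimate above holds pointwise. Alternatively one notes that near $o$ one has $\Delta_M r\to+\infty$, so passing to the distributional divergence only adds a nonnegative contribution and the integration by parts is unaffected. A secondary point worth stating explicitly is the normalization convention $\vec H=\tr(\mathrm{II})$, which is what makes the threshold read $(m-1)a$ rather than $m\,a$; with this convention the hypothesis $|H|\leq b<(m-1)a$ is precisely what guarantees $\kappa>0$.
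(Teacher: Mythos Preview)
Your argument is correct and is essentially the proof given by Bessa and Montenegro in \cite{BM03}. The present paper does not reprove this result at all: Theorem~\ref{lambda} is stated as a quoted tool from \cite{BM03} and used as a black box (to obtain $\lambda_1(M)\geq(n-1)^2/16$ when $c=-1$), so there is no in-paper proof to compare against. Your write-up recovers exactly the Bessa--Montenegro mechanism: the Cheeger-type lemma $\lambda_1\geq\kappa^2/4$ from a unit-length vector field with divergence $\geq\kappa$, applied to $X=\nabla_M r$ via Hessian comparison in the Cartan--Hadamard ambient and the submanifold Laplacian formula $\Delta_M r=\tr_M\Hess_N r+\langle\nabla_N r,\vec H\rangle$. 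The handling of the singularity at $o$ is also standard; one can simply choose $o\in N\setminus M$ (possible whenever $m<n$) to avoid it entirely, and in the remaining case your capacity remark suffices.
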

Finally, we also need a basic inequality.
\begin{lemma}\label{abc}
	Given $a, b, c\in\mathbb{R}$. For any $\varepsilon, \eta>0$, we have
	$$(a+b+c)^2\leq\left(1+\frac{1}{\varepsilon}\right)a^2+(1+\varepsilon)(1+\eta)b^2+(1+\varepsilon)\left(1+\frac{1}{\eta}\right)c^2.$$
	\end{lemma}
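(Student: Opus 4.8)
The plan is to obtain the bound by two successive applications of the weighted Young inequality $2xy \le \tfrac{1}{\delta}x^2 + \delta y^2$, valid for any $\delta>0$, with the grouping dictated by the asymmetric roles of the three coefficients in the conclusion. The factor $1+\tfrac{1}{\varepsilon}$ is attached only to $a^2$, while the common prefactor $1+\varepsilon$ multiplies both $b^2$ and $c^2$; this structure tells us that $a$ should first be separated from the pair $(b+c)$ using the parameter $\varepsilon$, after which $b$ and $c$ are separated from one another using $\eta$.

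First I would write $(a+b+c)^2 = a^2 + 2a(b+c) + (b+c)^2$ and estimate the cross term by $2a(b+c) \le \tfrac{1}{\varepsilon}a^2 + \varepsilon(b+c)^2$. Collecting terms yields
$$(a+b+c)^2 \le \Big(1+\tfrac{1}{\varepsilon}\Big)a^2 + (1+\varepsilon)(b+c)^2.$$
Second, I would expand $(b+c)^2 = b^2 + 2bc + c^2$ and estimate $2bc \le \eta b^2 + \tfrac{1}{\eta}c^2$, which gives $(b+c)^2 \le (1+\eta)b^2 + \big(1+\tfrac{1}{\eta}\big)c^2$. Substituting this into the previous display and distributing the factor $(1+\varepsilon)$ produces exactly the claimed inequality.

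There is no genuine obstacle here, since both estimates are just rearrangements of the elementary facts $\big(\tfrac{1}{\sqrt{\varepsilon}}a - \sqrt{\varepsilon}(b+c)\big)^2 \ge 0$ and $\big(\sqrt{\eta}\,b - \tfrac{1}{\sqrt{\eta}}c\big)^2 \ge 0$, so the argument is valid for all real $a,b,c$ and all $\varepsilon,\eta>0$. The only point requiring (minor) care is the choice of grouping and of which parameter to pair with which cross term: a different order would still yield a bound of the same shape but with the parameters distributed differently among the three squared quantities, so one must match the grouping to the precise coefficients demanded by the statement.
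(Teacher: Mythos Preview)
Your proof is correct and follows essentially the same route as the paper: first bound $(a+b+c)^2$ by $(1+\tfrac{1}{\varepsilon})a^2+(1+\varepsilon)(b+c)^2$, then bound $(b+c)^2$ by $(1+\eta)b^2+(1+\tfrac{1}{\eta})c^2$, and combine. The only difference is that you spell out the Young-inequality step for each cross term explicitly, whereas the paper simply asserts the two intermediate inequalities as ``easy to see''.
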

\begin{proof}
	It is easy to see that 
	$$(a+b+c)^2\leq\left(1+\frac{1}{\varepsilon}\right)a^2+(1+\varepsilon)(b+c)^2,$$
	and
	$$(b+c)^2\leq\left(1+\eta\right)b^2+(1+\frac{1}{\eta})c^2.$$
	The proof follows by combining the above inequalities.
\end{proof}
Now, we give a proof of the first main theorem.
\begin{proof}[Proof of Theorem \ref{rigid}]
Let $\varphi$ be a Lipschitz function with compact support in a geodesic ball $B_o(3R)$ of radius $3R$ centered at a point $o\in M$ and $\delta$ a positive constant. Multiplying both side of \eqref{simon} by $\varphi^2\left(\left|A\right|+\delta\right)^{p-2}$ and integrating over $M$, we obtain
\begin{align*}
\int_{M}\varphi^2\left(\left|A\right|+\delta\right)^{p-2}\Delta\left|A\right|^2-\dfrac{c}{2}(n+1) &\int_{M}\varphi^2\left(\left|A\right|+\delta\right)^{p-2}\left|A\right|^{2}\\+\left(4-\dfrac{2}{n}\right)\int_{M}\varphi^2\left(\left|A\right|+\delta\right)^{p-2}\left|A\right|^{4}
&\geq \left(2+\dfrac{4}{n}\right)\int_{M}\varphi^2\left(\left|A\right|+\delta\right)^{p-2}\left|\nabla\left|A\right|\right|^2.
\end{align*}
The first term of the above inequality can be directly  computed as follows.
\begin{align*}
\int_{M}&\varphi^2\left(\left|A\right|+\delta\right)^{p-2}\Delta\left|A\right|^2\\
&=-\int_{M}\left\langle\nabla\left[\varphi^2\left(\left|A\right|+\delta\right)^{p-2}\right], \nabla \left|A\right|^2\right\rangle\\
&=-\int_{M}\left\langle\nabla\left(\varphi^2\right)\left(\left|A\right|+\delta\right)^{p-2}+\nabla\left[\left(\left|A\right|+\delta\right)^{p-2}\right]\varphi^2, \nabla\left|A\right|^2\right\rangle\\
&=-\int_{M}\left\langle 2 \varphi\nabla \varphi\left|A\right|\left(\left|A\right|+\delta\right)^{p-2}+\nabla\left[\left(\left|A\right|+\delta\right)^{p-2}\right]\varphi^2, 2\left|A\right| \nabla \left|A\right|\right\rangle\\
&=-\int_{M}\left\langle 2 \varphi\nabla \varphi\left|A\right|\left(\left|A\right|+\delta\right)^{p-2}+\left[\left(p-2\right)\left(\left|A\right|+\delta\right)^{p-3}\nabla\left(\left|A\right|+\delta\right)\right]\varphi^2, 2\left|A\right| \nabla \left|A\right|\right\rangle\\
&=-\int_{M}\left\langle 2 \varphi\nabla \varphi\left|A\right|\left(\left|A\right|+\delta\right)^{p-2}+\left[\left(p-2\right)\left(\left|A\right|+\delta\right)^{p-3}\nabla\left|A\right|\right]\varphi^2, 2\left|A\right| \nabla \left|A\right|\right\rangle\\
%&=-\int_{M}\left\langle 2 \varphi\nabla \varphi\left|A\right|\left(\left|A\right|+\delta\right)^{p-2}, 2\left|A\right| \nabla \left|A\right|\right\rangle-\int_{M}\left\langle\left[\left(p-2\right)\left(\left|A\right|+\delta\right)^{p-3}\nabla\left|A\right|\right]\varphi^2, 2\left|A\right| \nabla \left|A\right|\right\rangle\\
&=-2\left(p-2\right)\int_{M}\varphi^2\left|A\right|\left(\left|A\right|+\delta\right)^{p-3}\left|\nabla\left|A\right|\right|^2-4\int_{M}\varphi\left|A\right|\left(\left|A\right|+\delta\right)^{p-2}\left\langle \nabla\varphi, \nabla\left|A\right|\right\rangle.
\end{align*}
Plugging this identity into the previous inequality, it turns out that
\begin{equation}\label{eq:7d}
\begin{split}
\left(2+\dfrac{4}{n}\right)&\int_{M}\varphi^2\left(\left|A\right|+\delta\right)^{p-2}\left|\nabla\left|A\right|\right|^2\\
&\leq -\dfrac{c}{2}(n+1)\int_{M}\varphi^2\left(\left|A\right|+\delta\right)^{p-2}\left|A\right|^{2}\\
&\hspace{0.44cm}+\left(4-\dfrac{2}{n}\right)\int_{M}\varphi^2\left(\left|A\right|+\delta\right)^{p-2}\left|A\right|^{4}\\
&\hspace{0.43cm}-2\left(p-2\right)\int_{M}\varphi^2\left|A\right|\left(\left|A\right|+\delta\right)^{p-3}\left|\nabla\left|A\right|\right|^2\\
&\hspace{0.43cm}-4\int_{M}\varphi\left|A\right|\left(\left|A\right|+\delta\right)^{p-2}\left\langle \nabla\varphi, \nabla\left|A\right|\right\rangle.
\end{split}
\end{equation}
For any $\varepsilon, \eta>0$, using H$\ddot{o}$lder's inequality, the weighted Sobolev's inequality and Cauchy-Schwarz's inequality consecutively, we estimate the second term on the right hand side of \eqref{eq:7d} as follows.
\begin{equation}\label{eq:7}
\begin{split}
\int_{M}&\varphi^2\left(\left|A\right|+\delta\right)^{p-2}\left|A\right|^{4}\\
&\leq \Bigg(\int_{M}\left|A\right|^{2\cdot\frac{n}{2}}\Bigg)^{\frac{2}{n}}\Bigg\{\int_{M}\left[\left|A\right| \varphi\left(\left|A\right|+\delta\right)^{\frac{p-2}{2}}\right]^{2\cdot\frac{n}{n-2}}\Bigg\}^{\frac{n-2}{n}}\\
&\leq C_s\|A\|_n^2\int_M\left|\nabla\left[|A|\varphi\left(\left|A\right|+\delta\right)^{\frac{p-2}{2}}\right]\right|^2\\
%&=C_s\|A\|_n^2\int_M \Bigg|\nabla|A|\varphi\left(\left|A\right|+\delta\right)^{\frac{p-2}{2}}+|A|\nabla\varphi\left(\left|A\right|+\delta\right)^{\frac{p-2}{2}}\\
%&\hspace{0.5cm}|A|\varphi\nabla\left[\left(\left|A\right|+\delta\right)^{\frac{p-2}{2}}\right]\Bigg|^2\\
&=C_s\|A\|_n^2\int_M \Bigg|\nabla\varphi|A|\left(\left|A\right|+\delta\right)^{\frac{p-2}{2}}+\varphi\nabla|A|\left(\left|A\right|+\delta\right)^{\frac{p-2}{2}}\\
&\hspace{0.6cm}\frac{p-2}{2}\varphi|A|\left(\left|A\right|+\delta\right)^{\frac{p-4}{2}}\nabla\left|A\right|\Bigg|^2\\
&\leq C_s\|A\|_n^2\Bigg[\left(1+\frac{1}{\varepsilon}\right)\int_M\left|\nabla\varphi\right|^2\left|A\right|^2\left(\left|A\right|+\delta\right)^{p-2}\\
&\hspace{0.5cm}+\left(1+\varepsilon\right)\left(1+\eta\right)\int_M\varphi^2\left|\nabla\left|A\right|\right|^2\left(\left|A\right|+\delta\right)^{p-2}\\
&\hspace{0.5cm}+\left(1+\varepsilon\right)\left(1+\frac{1}{\eta}\right)\frac{(p-2)^2}{4}\int_M \varphi^2\left|A\right|^2\left(\left|A\right|+\delta\right)^{p-4}\left|\nabla\left|A\right|\right|^2\Bigg]\\
&\leq C_s\|A\|_n^2\left(1+\frac{1}{\varepsilon}\right)\int_M\left|\nabla\varphi\right|^2\left(\left|A\right|+\delta\right)^{p-2}\left|A\right|^2\\
&\hspace{0.5cm}+C_s\|A\|_n^2\left(1+\varepsilon\right)\Bigg[\left(1+\eta\right)+\left(1+\frac{1}{\eta}\right)\frac{(p-2)^2}{4}\Bigg]\\
&\hspace{0.7cm}\int_M\varphi^2\left(\left|A\right|+\delta\right)^{p-2}\left|\nabla\left|A\right|\right|^2.
\end{split}
\end{equation}
Here in the fourth inequality, we used Lemma \ref{abc}. Now, the definition of $\lambda_1(M)$ and its variational characterization (see \cite{BM03}) imply
$$
\lambda_1(M)\leq  \dfrac{\int_{M}\left|\nabla \varphi\right|^2}{\int_{M}\varphi^2},
$$
for any compactly supported nonconstant Lipschitz function $\varphi$ on $M$. This together with Lemma \ref{abc} infers
\begin{equation}\label{eq:10}
\begin{split}
\lambda_1(M)
&\int_{M}\left|A\right|^{2}\varphi^2\left(\left|A\right|+\delta\right)^{\frac{p-2}{2}}\\
&\leq\int_M\left|\nabla\left[|A|\varphi\left(\left|A\right|+\delta\right)^{\frac{p-2}{2}}\right]\right|^2\\
&\leq \left(1+\frac{1}{\varepsilon}\right)\int_M\left|\nabla\varphi\right|^2\left(\left|A\right|+\delta\right)^{p-2}\left|A\right|^2\\
&\hspace{0.5cm}+\left(1+\varepsilon\right)\Bigg[\left(1+\eta\right)+\left(1+\frac{1}{\eta}\right)\frac{(p-2)^2}{4}\Bigg]\int_M\varphi^2\left(\left|A\right|+\delta\right)^{p-2}\left|\nabla\left|A\right|\right|^2.
\end{split}
\end{equation}
Observar that 
\begin{equation}\label{eq:12}
\int_M\varphi^2\left(\left|A\right|+\delta\right)^{p-2}\left|\nabla\left|A\right|\right|^2 \geq\int_M\varphi^2\left(\left|A\right|+\delta\right)^{p-3}\left|A\right|\left|\nabla\left|A\right|\right|^2. 
\end{equation}
and a simple application of the Cauchy inequality gives
\begin{equation}\label{eq:13}
\begin{split}
-2\int_{M}&\varphi\left(\left|A\right|+\delta\right)^{p-2}\left|A\right|\langle\nabla \varphi,\nabla\left|A\right|\rangle\\
&\leq\frac{1}{\epsilon}\int_{M}\left(\left|A\right|+\delta\right)^{p-2}\left|A\right|^{2}\left|\nabla \varphi\right|^2
+\epsilon\int_M\varphi^2\left(\left|A\right|+\delta\right)^{p-2}\left|\nabla\left|A\right|\right|^2  
\end{split}
\end{equation}
for any $\epsilon>0$. Combining \eqref{eq:7d}-\eqref{eq:13}, we obtain
\begin{align*}
&\left(2+\dfrac{4}{n}\right)\int_{M}\varphi^2\left(\left|A\right|+\delta\right)^{p-2}\left|\nabla\left|A\right|\right|^2\\
&\hspace{1cm}\leq\Bigg[ C_s\|A\|_n^2\Bigg(4-\frac{2}{n}\Bigg)\Bigg(1+\frac{1}{\varepsilon}\Bigg)-\dfrac{c(n+1)}{2\lambda_1(M)}\Bigg(1+\frac{1}{\varepsilon}\Bigg)+\frac{2}{\varepsilon}\Bigg]\\
&\hspace{1.6cm}\int_M\left|\nabla\varphi\right|^2\left(\left|A\right|+\delta\right)^{p-2}\left|A\right|^2\\
&\hspace{1.5cm}+\Bigg\{C_s\|A\|_n^2\Bigg(4-\frac{2}{n}\Bigg)\left(1+\varepsilon\right)\Bigg[\left(1+\eta\right)+\left(1+\frac{1}{\eta}\right)\frac{(p-2)^2}{4}\Bigg]\\
&\hspace{1.5cm}-\dfrac{c(n+1)}{2\lambda_1(M)}(1+\varepsilon)\Bigg[\left(1+\eta\right)+\left(1+\frac{1}{\eta}\right)\frac{(p-2)^2}{4}\Bigg]+2\varepsilon-2(p-2)\Bigg\}\\
&\hspace{1.6cm}\int_{M}\varphi^2\left(\left|A\right|+\delta\right)^{p-2}\left|\nabla\left|A\right|\right|^2.
\end{align*}
Now we choose $\eta=\frac{2-p}{p}$ and the previous inequality implies
\begin{align*}
&\Bigg\{\Bigg(2p-2\varepsilon+\frac{4}{n}-2\Bigg)+\Bigg[C_s\|A\|_n^2\Bigg(\frac{2}{n}-4\Bigg)\left(1+\varepsilon\right)+\dfrac{c(n+1)}{2\lambda_1(M)}(1+\varepsilon)\Bigg]\frac{(4-p)^2}{4}\Bigg\}\\
&\hspace{0.5cm}\int_{M}\varphi^2\left(\left|A\right|+\delta\right)^{p-2}\left|\nabla\left|A\right|\right|^2\\
&\leq\Bigg[C_s\|A\|_n^2\Bigg(4-\frac{2}{n}\Bigg)\left(1+\frac{1}{\varepsilon}\right)-\dfrac{c(n+1)}{2\lambda_1(M)}\left(1+\frac{1}{\varepsilon}\right)+\frac{2}{\varepsilon}\Bigg]\int_M\left|\nabla\varphi\right|^2\left|A\right|^p.
\end{align*}
It is easy to see that 
\begin{align*}
&\Bigg(2p-2\varepsilon+\frac{4}{n}-2\Bigg)+\Bigg[C_s\|A\|_n^2\Bigg(\frac{2}{n}-4\Bigg)\left(1+\varepsilon\right)+\dfrac{c(n+1)}{2\lambda_1(M)}(1+\varepsilon)\Bigg]\frac{(4-p)^2}{4}\\
&\overset{\varepsilon\to 0}{\longrightarrow}\Bigg(2p+\frac{4}{n}-2\Bigg)+\Bigg[C_s\|A\|_n^2\Bigg(\frac{2}{n}-4\Bigg)+\dfrac{c(n+1)}{2\lambda_1(M)}\Bigg]\frac{(4-p)^2}{4}.
\end{align*}
Now, we assume that
\begin{align}\label{lambdacondition}
\lambda_1(M)
>\dfrac{-c(n+1)(4-p)^2}{16\Big(p+\frac{2}{n}-1\Big)}
\end{align}
and 
\begin{equation}\label{sff}
\|A\|_n^2<\frac{8\Big(p+\frac{2}{n}-1\Big)+\frac{c(n+1)(4-p)^2}{2\lambda_1(M)}}{C_s\left(4-\frac{2}{n}\right)(4-p)^2}.
\end{equation}
Using the above assumptions, there exists $\epsilon>0$ and a constant $C=C(\epsilon, n,\lambda_1(M), \|A\|_n)$ such that
$$\int_M\varphi^2\left(\left|A\right|+\delta\right)^{p-2}|\nabla|A||^{2}\leq C\int_M  \left|\nabla\varphi\right|^2\left|A\right|^{p}.$$
Now, for any $R>0$ and a fixed point $o\in M$, we choose $\varphi\in\mathcal{C}^\infty_0(M)$ satisfying $0\leq \varphi\leq 1, \varphi\equiv1$ in the geodesic ball $B_o(R)$, $\varphi\equiv 0$ outside $B_o(2R)$, and $|\nabla\varphi|\leq 1/R$. Plugging $\varphi$ in the above inequality, by letting $R\to\infty$, we conclude that
$$\int_M|\nabla|A||^{2}=0.$$
This implies $|\nabla|A||=0$, consequently, $|A|$ is constant. Since a Sobolev inequality holds true on $M$, the volume of $M$ must be infinite. Hence, $|A|=0$ due to $\|A\|_n<\infty$. To prove the Theorem \ref{rigid}, we note that

\textbf{Case 1: }If $c=-1$, then inequalities \eqref{lambdacondition} and \eqref{sff} mean
$$
\lambda_1(M)
>\dfrac{(n+1)(4-p)^2}{16\Big(p+\frac{2}{n}-1\Big)}
$$
and 
$$
\|A\|_n^2<\frac{8\Big(p+\frac{2}{n}-1\Big)-\frac{(n+1)(4-p)^2}{2\lambda_1(M)}}{C_s\left(4-\frac{2}{n}\right)(4-p)^2}.
$$

\textbf{Case 2: }If $c=0$, then inequality \eqref{lambdacondition} can be removed.  Moreover, \eqref{sff} means
$$
\|A\|_n^2<\frac{8\Big(p+\frac{2}{n}-1\Big)}{C_s\left(4-\frac{2}{n}\right)(4-p)^2}.
$$
 The proof is complete. 
%%%%%%
%%%%
\end{proof}

%When $c=-1, p=\frac{n}{2}$, inequalities \eqref{lambdacondition} and \eqref{sff} become
%$$\lambda_1(M)>\frac{n^3(n+1)}{16(n^2-n+2)}, \quad \|A\|_n<\sqrt{\frac{4-\frac{4(n-2)}{n^2}-\frac{n(n+1)}{4\lambda_1(M)}}{(2n-1)C_s}}.$$
%Following the proof of Theorem \ref{rigid}, we obtain
%\begin{proposition}
%Let $M$ be an $n$-dimensional totally real minimal submanifold immersed in $\mathbb{CH}^n(-1)$. If 
%$$\lambda_1(M)>\frac{n^3(n+1)}{16(n^2-n+2)}, \quad \|A\|_n<\sqrt{\frac{4-\frac{4(n-2)}{n^2}-\frac{n(n+1)}{4\lambda_1(M)}}{(2n-1)C_s}}$$
%then $M$ is totally geodesic. Consequently, $M$ is a space of constant curvature $K=-\frac{1}{4}$.
%\end{proposition}
\noindent
Note that sectional curvatures of $\mathbb{CH}^n(-1)$ are in $\left[-1, -\frac{1}{4}\right]$.  Therefore, when $M$ is minimal \textcolor{blue}{Theorem \ref{lambda}} gives us a lower bound of $\lambda_1(M)$. 
$$\lambda_1(M)\geq\frac{(n-1)^2}{16}.$$
Using this estimation, following the proof of Theorem \ref{rigid}, we see that the inequality \eqref{sff} becomes
$$p+\frac{2}{n}-1-\frac{(n+1)(4-p)^2}{(n-1)^2}>0 \text{ and }\|A\|_n< \sqrt{\frac{4\Big(p+\frac{2}{n}-1\Big)-\frac{4(n+1)(4-p)^2}{(n-1)^2}}{C_s\left(4-\frac{2}{n}\right)(4-p)^2}}.$$
As a consequence, we obtain the following result.
\begin{proposition}
Let $M$ be an $n$-dimensional totally real minimal submanifold immersed in $\mathbb{CH}^n(-1)$. For any $1<p<2$ such that 
$$p+\frac{2}{n}-1-\frac{(n+1)(4-p)^2}{(n-1)^2}>0.$$
If $$\|A\|_n< \sqrt{\frac{4\Big(p+\frac{2}{n}-1\Big)-\frac{4(n+1)(4-p)^2}{(n-1)^2}}{C_s\left(4-\frac{2}{n}\right)(4-p)^2}}$$
and $\|A\|_{p}<\infty$, then $M$ is totally geodesic. %Consequently, $M$ is a space of constant curvature $K=-\frac{1}{4}$.
\end{proposition}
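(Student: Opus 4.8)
The plan is to derive the Proposition as an immediate specialization of Theorem \ref{rigid}(1), replacing the abstract spectral hypothesis by the explicit geometric lower bound available in this setting. First I would invoke Theorem \ref{lambda}: the ambient space $\mathbb{CH}^n(-1)$ is a Hadamard manifold whose sectional curvature lies in $\left[-1,-\tfrac14\right]$, so $K_N\le-\left(\tfrac12\right)^2$; moreover $M$, being minimal in a Hadamard manifold, is complete and noncompact with vanishing mean curvature vector, so $|H|=0<(n-1)\cdot\tfrac12$. Applying Theorem \ref{lambda} with $a=\tfrac12$, $b=0$, $m=n$ then gives $\lambda_1(M)\ge\frac{\left[(n-1)/2\right]^2}{4}=\frac{(n-1)^2}{16}$.

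Next I would verify that the two hypotheses of Theorem \ref{rigid}(1) are consequences of the single inequality assumed in the Proposition. Since $1<p<2$ we have $p+\tfrac2n-1>0$, so the spectral condition $\lambda_1(M)>\frac{(n+1)(4-p)^2}{16\left(p+\frac2n-1\right)}$ is equivalent to $\left(p+\tfrac2n-1\right)\lambda_1(M)>\frac{(n+1)(4-p)^2}{16}$; by the bound $\lambda_1(M)\ge\frac{(n-1)^2}{16}$ this holds provided $\left(p+\tfrac2n-1\right)(n-1)^2>(n+1)(4-p)^2$, which is exactly the standing hypothesis $p+\frac2n-1-\frac{(n+1)(4-p)^2}{(n-1)^2}>0$. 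For the norm condition, the key remark is that the right-hand side of \eqref{sff} with $c=-1$, namely $\frac{8\left(p+\frac2n-1\right)-\frac{(n+1)(4-p)^2}{2\lambda_1(M)}}{C_s\left(4-\frac2n\right)(4-p)^2}$, is nondecreasing in $\lambda_1(M)$ because $\lambda\mapsto-\frac1{2\lambda}$ is increasing; hence it is at least its value at $\lambda_1(M)=\frac{(n-1)^2}{16}$, which equals $\frac{8\left(p+\frac2n-1\right)-\frac{8(n+1)(4-p)^2}{(n-1)^2}}{C_s\left(4-\frac2n\right)(4-p)^2}$, and since $p+\frac2n-1-\frac{(n+1)(4-p)^2}{(n-1)^2}>0$ this in turn dominates $\frac{4\left(p+\frac2n-1\right)-\frac{4(n+1)(4-p)^2}{(n-1)^2}}{C_s\left(4-\frac2n\right)(4-p)^2}$, which is the square of the bound assumed in the Proposition. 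Therefore the hypothesis $\|A\|_n<\sqrt{\frac{4\left(p+\frac2n-1\right)-\frac{4(n+1)(4-p)^2}{(n-1)^2}}{C_s\left(4-\frac2n\right)(4-p)^2}}$ forces $\|A\|_n^2$ below the threshold required in Theorem \ref{rigid}(1).

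Finally, with both hypotheses of Theorem \ref{rigid}(1) in force and $\|A\|_p<\infty$ by assumption, Theorem \ref{rigid} yields that $M$ is totally geodesic, which finishes the proof. I do not anticipate a genuine obstacle: the whole argument amounts to substituting the lower bound $\frac{(n-1)^2}{16}$ for $\lambda_1(M)$ into the conclusion of Theorem \ref{rigid}, and the only point needing care is that the monotonicity in $\lambda_1(M)$ runs in the favorable direction, so that using a lower bound still produces a valid (if slightly conservative) sufficient condition, as checked above.
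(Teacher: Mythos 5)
Your proposal is correct and follows essentially the same route as the paper: invoke Theorem \ref{lambda} with $a=\tfrac12$, $b=0$, $m=n$ to get $\lambda_1(M)\ge\frac{(n-1)^2}{16}$, then substitute this lower bound into the hypotheses of Theorem \ref{rigid}(1) (equivalently into \eqref{lambdacondition} and \eqref{sff}), using that the admissible threshold for $\|A\|_n^2$ is monotone nondecreasing in $\lambda_1(M)$. Your remark that the Proposition's bound, with numerator $4\big[(p+\tfrac2n-1)-\tfrac{(n+1)(4-p)^2}{(n-1)^2}\big]$ rather than the $8\big[\cdots\big]$ obtained by direct substitution, is the more restrictive one and hence still sufficient is exactly the right way to reconcile the constant with the theorem.
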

\noindent
%Now, when $c=0$ and $p=\frac{n}{2}$, inequality \eqref{sff} reads as 
%$$\|A\|_n<4\sqrt{\frac{n^2-2n+4}{C_s(4n-2)(8-n)^2}}.$$
%We have the following proposition.
%\begin{proposition}
%Let $M$ be an $n$-dimensional totally real minimal submanifold immersed in $\mathbb{C}^n(0)$. If 
%$$\|A\|_n<4\sqrt{\frac{n^2-2n+4}{C_s(4n-2)(8-n)^2}},$$
%then $M$ is a plane.
%\end{proposition}
\section{Totally real minimal submanifolds and Connectedness at infinity}
This section is used to give a proof of Theorem \ref{ends}.
\begin{proof}[Proof of Theorem \ref{ends}]
Let $\omega$ be an $L^{p}$ harmonic $1$-form on $M$. It is well-known that (for instance, see \cite{CDS24}) 
$${\rm Ric}\geq \frac{1}{4}(n-1)c-|A|^2.$$
%$$ 
%\Delta\omega = 0\hspace{0.5cm}\text{and}\hspace{0.5cm}\int_M\left|\omega\right|^{2p}d\upsilon <\infty.
% $$
Therefore, an application of Bochner's formula yields
$$\begin{aligned} 
\Delta\left|\omega\right|^2
&=2(\left|\nabla\omega\right|^2+{\rm Ric}(\omega,\omega))\\
&\geq 2|\nabla\omega|^2+2\left(\frac{1}{4}(n-1)c-|A|^2\right)|\omega|^2.
\end{aligned} $$
Using the refined Kato inequality and the fact that $  
\Delta\left|\omega\right|^2=2(\left|\omega\right|\nabla\left|\omega\right|+\left|\nabla\left|\omega\right|\right|^2)
 $, we have 
\begin{equation}\label{eq:13b}
\left|\omega\right|\Delta\left|\omega\right|\geq \dfrac{1}{n-1}\left|\nabla\left|\omega\right|\right|^2-\left|A\right|^2\left|\omega\right|^2+\dfrac{1}{4}\Big(n-1\big)c\left|\omega\right|^2.
\end{equation}
Let $\varphi$ be a Lipschitz function with compact support in a geodesic ball $B_o(3R)$ of radius $R$ centered at $o\in M$. For a positive number $\delta>0$, multiplying both sides of the above inequality by $\varphi^2\left(\left|\omega\right|+\delta\right)^{p-2}$ and integrating over $B(R)$, we obtain
\begin{align*}
\int_{B(R)}\varphi^2\left(\left|\omega\right|+\delta\right)^{p-2}\left|\omega\right|\Delta\left|\omega\right|-\dfrac{c}{4}(n-1)& \int_{B(R)}\varphi^2\left(\left|\omega\right|+\delta\right)^{p-2}\left|\omega\right|^{2}\\
+\int_{B(R)}\varphi^2\left(\left|\omega\right|+\delta\right)^{p-2}\left|A\right|^{2}\left|\omega\right|^{2}
&\geq \dfrac{1}{n-1}\int_{B(R)}\varphi^2\left(\left|\omega\right|+\delta\right)^{p-2}\left|\nabla\left|\omega\right|\right|^2.
\end{align*}
An application of the divergence theorem yields
\begin{align*}
\int_{B(R)}&\varphi^2\left(\left|\omega\right|+\delta\right)^{p-2}\left|\omega\right|\Delta\left|\omega\right|\\
&=-\int_{B(R)}\left\langle\nabla\left[\varphi^2\left(\left|\omega\right|+\delta\right)^{p-2}\left|\omega\right|\right], \nabla \left|\omega\right|\right\rangle\\
&=-\left(p-2\right)\int_{B(R)}\varphi^2\left|\omega\right|\left(\left|\omega\right|+\delta\right)^{p-3}\left|\nabla\left|\omega\right|\right|^2-\int_{B(R)}\varphi^2\left(\left|\omega\right|+\delta\right)^{p-2}\left|\nabla\left|\omega\right|\right|^2\\
&\hspace{0.45cm}-2\int_{B(R)}\varphi\left|\omega\right|\left(\left|\omega\right|+\delta\right)^{p-2}\left\langle \nabla \varphi, \nabla\left|\omega\right|\right\rangle.
\end{align*}
This identity together with the previous inequality implies,  
\begin{equation}\label{eq:15}
\begin{split}
\dfrac{n}{n-1}&\int_{B(R)}\varphi^2\left(\left|\omega\right|+\delta\right)^{p-2}\left|\nabla\left|\omega\right|\right|^2\\
&\leq -\dfrac{c}{4}(n-1)\int_{B(R)}\varphi^2\left(\left|\omega\right|+\delta\right)^{p-2}\left|\omega\right|^{2}\\
&\hspace{0.44cm}+\int_{B(R)}\varphi^2\left(\left|\omega\right|+\delta\right)^{p-2}\left|A\right|^{2}\left|\omega\right|^{2}\\
&\hspace{0.43cm}-\left(p-2\right)\int_{B(R)}\varphi^2\left|\omega\right|\left(\left|\omega\right|+\delta\right)^{p-3}\left|\nabla\left|\omega\right|\right|^2\\
&\hspace{0.43cm}-2\int_{B(R)}\varphi^2\left|\omega\right|\left(\left|\omega\right|+\delta\right)^{p-2}\left\langle \nabla \varphi, \nabla\left|\omega\right|\right\rangle.
\end{split}
\end{equation}
Using the same strategy as in the proof of Theorem \ref{rigid}, by applying H$\ddot{o}$lder's inequality, the weighted Sobolev's inequality and Cauchy-Schwarz's inequality consecutively, we obtain
\begin{equation}\label{eq:17}
\begin{split}
\int_{B(R)}&\varphi^2\left(\left|\omega\right|+\delta\right)^{p-2}\left|A\right|^{2}\left|\omega\right|^{2}\\
&\leq \Bigg(\int_{B(R)}\left|A\right|^{2\cdot\frac{n}{2}}\Bigg)^{\frac{2}{n}}\Bigg\{\int_{B(R)}\left[\left|\omega\right| \varphi\left(\left|\omega\right|+\delta\right)^{\frac{p-2}{2}}\right]^{2\cdot\frac{n}{n-2}}\Bigg\}^{\frac{n-2}{n}}\\
&\leq C_s\|A\|_n^2\int_{B(R)}\left|\nabla\left[|\omega|\varphi\left(\left|\omega\right|+\delta\right)^{\frac{p-2}{2}}\right]\right|^2\\
&\leq C_s\|A\|_n^2\left(1+\frac{1}{\varepsilon}\right)\int_{B(R)}\left|\nabla \varphi\right|^2\left(\left|\omega\right|+\delta\right)^{p-2}\left|\omega\right|^2\\
&\hspace{0.5cm}+C_s\|A\|_n^2\left(1+\varepsilon\right)\Bigg[\left(1+\eta\right)+\left(1+\frac{1}{\eta}\right)\frac{(p-2)^2}{4}\Bigg]\\
&\hspace{0.6cm}\int_{B(R)}\varphi^2\left(\left|\omega\right|+\delta\right)^{p-2}\left|\nabla\left|\omega\right|\right|^2.
\end{split}
\end{equation}
Here we used Lemma \ref{abc} in the last inequality. Note that if $c=0$ then the first term on the right hand side of \eqref{eq:15} disappears. If $c=-1$ then the sectional curvatures of $M$ are in $\left[-1, -\frac{1}{4}\right]$, we can estimate this term as follows. Since $M$ is minimal, Theorem \ref{lambda} infers
$$\lambda_1(M)\geq\frac{(n-1)^2}{16}.$$
Using this inequality, we estimate the first term on the right hand side of \eqref{eq:15} as follows.
\begin{align}
\frac{(n-1)^2}{16}&\int_{B(R)}\left|\omega\right|^{2}\varphi^2\left(\left|\omega\right|+\delta\right)^{p-2}\notag\\
&\leq\int_{B(R)}\left|\nabla\left[|\omega|\varphi\left(\left|\omega\right|+\delta\right)^{\frac{p-2}{2}}\right]\right|^2\notag\\ 
&\leq \left(1+\frac{1}{\varepsilon}\right)\int_{B(R)}\left|\nabla \varphi\right|^2\left(\left|\omega\right|+\delta\right)^{p-2}\left|\omega\right|^2\\
&\hspace{0.5cm}+\left(1+\varepsilon\right)\Bigg[\left(1+\eta\right)+\left(1+\frac{1}{\eta}\right)\frac{(p-2)^2}{4}\Bigg]\notag\\
&\hspace{0.6cm}\int_{B(R)}\varphi^2\left(\left|\omega\right|+\delta\right)^{p-2}\left|\nabla\left|\omega\right|\right|^2.\notag
\end{align}
Again, we observe that 
\begin{equation}\label{eq:12b}
\int_{B(R)}f^2\left(\left|\omega\right|+\delta\right)^{p-2}\left|\nabla\left|\omega\right|\right|^2 \geq\int_{B(R)}f^2\left(\left|\omega\right|+\delta\right)^{p-3}\left|\omega\right|\left|\nabla\left|\omega\right|\right|^2. 
\end{equation}
and 
\begin{equation}\label{eq:13c}
\begin{split}
-2\int_{B(R)}&\varphi^2\left(\left|\omega\right|+\delta\right)^{p-2}\left|\omega\right|\langle\nabla \varphi,\nabla\left|\omega\right|\rangle\\
&\leq\frac{1}{\epsilon}\int_{B(R)}\left(\left|\omega\right|+\delta\right)^{p-2}\left|\omega\right|^{2}\left|\nabla \varphi\right|^2
+\epsilon\int_{B(R)}\varphi^2\left(\left|\omega\right|+\delta\right)^{p-2}\left|\nabla\left|\omega\right|\right|^2  
\end{split}
\end{equation}
for any $\epsilon>0$. Now, let $\eta=\frac{2-p}{p}$ then combining \eqref{eq:15}-\eqref{eq:13c}, we obtain
\begin{align}\label{28e1} 
&\Bigg[\dfrac{n}{n-1}-\varepsilon+p-2-C_s\|A\|_n^2(1+\varepsilon)\frac{(4-p)^2}{4}+\frac{4c}{n-1}(1+\varepsilon)\frac{(4-p)^2}{4}\Bigg]\\
&\int_{B(R)}\varphi^2\left(\left|\omega\right|+\delta\right)^{p-2}\left|\nabla\left|\omega\right|\right|^2\leq \Bigg[C_s\|A\|_n^2\Bigg(1+\frac{1}{\varepsilon}\Bigg)-\frac{4c\Big(1+\frac{1}{\varepsilon}\Big)}{n-1}+\dfrac{1}{\epsilon}\Bigg]\int_{B(R)}\left|\nabla \varphi\right|^2\left|\omega\right|^{p}.\notag 
\end{align}
\textbf{Case 1: }$c=-1$.

\noindent
Observe that the condition
\begin{equation}\label{kq1}
\dfrac{n-(4-p)^2}{n-1}+p-2-C_s\|A\|_n^2\frac{(4-p)^2}{4}>0
\end{equation}
is equivalent to
$$\frac{4n-4(4-p)^2+4(p-2)(n-1)}{(n-1)(4-p)^2}>C_s\|A\|_n^2.$$
Then, our assumptions on $\|A\|$ means the condition \eqref{kq1} is satisfied. Therefore, we can choose a sufficiently small $\epsilon>0$ such that
$$ 
\dfrac{n}{n-1}-\varepsilon+p-2-C_s\|A\|_n^2(1+\varepsilon)\frac{(4-p)^2}{4}-\frac{4}{n-1}(1+\varepsilon)\frac{(4-p)^2}{4}>0.
$$
As a consequence, the inequality \eqref{28e1} implies that there exists a positive constant $C=C(n,p,\epsilon, \|A\|_n)$ such that
$$\int_M(|\omega|+\delta)^{p-2}|\nabla|\omega||^2\varphi^2\leq C\int_M|\omega|^{2}|\nabla \varphi|^p.$$
 Now, for a fixed point $o\in M$ and $R>0$, we choose $\varphi\in\mathcal{C}^\infty_0(M)$ satsifying $0\leq \varphi\leq1$, $\varphi\equiv1$ in the geodesic ball $B_o(R)$, $\varphi\equiv0$ outside $B_o(2R)$, and $|\nabla \varphi|\leq\frac{4}{R}$. The above inequality infers
 $$\int_{B_o(R)}(|\omega|+\delta)^{p-2}|\nabla|\omega||^2\leq \frac{C}{R^2}\int_{B_o(2R)}|\omega|^{p}.$$
Note that $\int_{M}\left| \omega\right|^{p}<\infty$, letting $R$ tend to infinity, we infer
 $\left|\nabla\left|\omega\right|\right|\equiv 0$. Consequently, $\left| \omega \right| \equiv$ constant. Note that $M$ is of infinite volume since the Sobolev inequality holds true there. This together with $\int_{M}|\omega|^{p}<\infty$ implies $|\omega|\equiv 0$ or equivalently $\omega=0$.

\noindent
\textbf{Case 2: } $c=0$. 

\noindent
In this case, the inequality \eqref{28e1} becomes 
\begin{align}
&\Bigg[\dfrac{n}{n-1}-\varepsilon+p-2-C_s\|A\|_n^2(1+\varepsilon)\frac{(4-p)^2}{4}\Bigg]\int_{B(R)}\varphi^2\left(\left|\omega\right|+\delta\right)^{p-2}\left|\nabla\left|\omega\right|\right|^2\notag\\
&\leq \Bigg[C_s\|A\|_n^2\Bigg(1+\frac{1}{\varepsilon}\Bigg)+\dfrac{1}{\epsilon}\Bigg]\int_{B(R)}\left|\nabla \varphi\right|^2\left|\omega\right|^{p}.\notag 
\end{align}
As in the previous case, we want to have that  
\begin{equation}\label{kq2}
\dfrac{n}{n-1}+p-2-C_s\|A\|_n^2\frac{(4-p)^2}{4}>0,
\end{equation}
or equivalently, 
$$\frac{4n+4(p-2)(n-1)}{(n-1)(4-p)^2}>C_s\|A\|_n^2.$$
Now, we can repeat the argument of the previous part to complete the proof. We omit the details. 
\end{proof}

%%%%%%%%%%%%%%%
%%%%%%%%%%%%%%%
%%%%%%%%%%%%%%%

\section*{Acknowledgement}
This work was completed during the stay of the first and the fourth authors at Vietnam Institute for Advanced Study in Mathematics (VIASM), in Summer 2025. They would like to thank the staff there for hospitality and support.

%\address{{ \it Dang Van Cuong}\\
%Department of Mathematics,\\ 
%Duy Tan University,\\ 
%Da Nang, Vietnam}
%{dvcuong@duytan.edu.vn}

\address{{\it Nguyen Thac Dung}\\
Faculty of Mathematics, Informatics, and Mechanics\\
Vietnam National Univeristy\\
University of Science at Hanoi, Vietnam}
{dungmath@gmail.com; or dungmath@vnu.edu.vn}

\address{{\it Le Gia Linh}\\
Department of Mathematics, Informatics, and Mechanics\\
Vietnam National Univeristy\\
University of Science at Hanoi, Vietnam}
{linhgiale1999@gmail.com}

\address{{\it Phung Bich Ngan}\\
Department of Mathematics,\\
 Hanoi Pedagogical University 2 \\
No. 32 Nguyen Van Linh, Xuan Hoa, Phuc Yen, Vinh Phuc}
{nganbg50@gmail.com}

\address{{ \it Abhitosh Upadhyay}\\
School of Mathematics and Computer Science,\\ 
Indian Institute of Technology\\ 
 Goa, Ponda, 403401, India}
{abhitosh@iitgoa.ac.in}
\end{document}